\theoremstyle{plain}
\newtheorem{theo}{Theorem}[section]
\newtheorem{lem}[theo]{Lemma}
\newtheorem{rem}[theo]{Remark}
\theoremstyle{definition}
\newtheorem{ex}{Example}
\numberwithin{equation}{section}
\newcommand{\brac}[1]{\left(#1\right)}
\def\om{\Omega}
\def\p{\partial}
\def\reals{\mathbb{R}}
\def\id{{\rm id}}
\def\eps{\varepsilon}
\def\epst{\tilde{\eps}}
\def\A{\alpha}
\def\At{\tilde{\A}}
\def\cf{c_{\rm f}}
\def\cfA{c_{{\rm f},\A}}
\def\cfe{c_{{\rm f},\eps}}
\def\cfAt{\tilde{c}_{{\rm f},\A}}
\def\cp{c_{\rm p}}
\def\cme{c_{{\rm m},\eps}}
\def\dx{{\rm d}x}
\def\dy{{\rm d}y}
\def\ut{\tilde{u}}
\def\xt{\tilde{x}}
\def\opluse{\oplus_{\eps}}
\def\ul{\underline}
\def\ol{\overline}
\newcommand{\inv}[1]{{#1}^{-1}}
\newcommand{\set}[2]{\{#1\,\mid\,#2\}}
\newcommand{\bset}[2]{\bigg\{#1\,\bigg|\,#2\bigg\}}
\def\na{\nabla}
\DeclareMathOperator{\opdiv}{div}
\def\div{\opdiv}
\DeclareMathOperator{\rot}{rot}
\DeclareMathOperator{\curl}{curl}
\DeclareMathOperator{\diam}{diam}
\DeclareMathOperator{\cont}{\sf C}
\DeclareMathOperator{\lebesgue}{\sf L}
\DeclareMathOperator{\hilbert}{\sf H}
\DeclareMathOperator{\divergence}{\sf D}
\DeclareMathOperator{\rotation}{\sf R}
\def\cic{\mathring{\cont}{}^\infty}
\def\lt{\lebesgue^2}					% L^2
\def\li{\lebesgue^\infty}			% L^\infty
\def\ltz{\lt_0}						% L^2 with zero mean
\def\ho{{\hilbert^1}}				% H^1
\def\hoc{\mathring{\hilbert}{}^1}
\def\d{{\divergence}}				% H(div)
\def\dz{\d_0}
\def\dc{\mathring{\d}}
\def\dcz{\dc_0}
\def\r{{\rotation}}					% H(rot)
\def\rz{\r_0}
\def\rc{\mathring{\r}}
\def\rcz{\rc_0}
\newcommand{\abs}[1]{\left|#1\right|}	
\newcommand{\norm}[1]{|#1|}
\newcommand{\normltom}[1]{\norm{#1}_{\lt}}
\newcommand{\normltomw}[2]{\norm{#1}_{\lt,#2}}
\newcommand{\normltomwi}[2]{\norm{#1}_{\lt,\inv{#2}}}
\newcommand{\scp}[2]{\langle#1,#2\rangle}
\newcommand{\scpltom}[2]{\scp{#1}{#2}_{\lt}}
\newcommand{\scpltomw}[3]{\scp{#1}{#2}_{\lt,#3}}
\title[\sc A Short Note on a Weighted Friedrichs Inequality]
{\Large\sf A Short Note on a Weighted Friedrichs Inequality}
\author{Immanuel Anjam}
\address{}
\email[Immanuel Anjam]{immanuel.anjam@gmail.com}
\author{Dirk Pauly}
\address{Faculty of Mathematics,
University of Duisburg-Essen, Campus Essen, Germany}
\email[Dirk Pauly]{dirk.pauly@uni-due.de}
\keywords{Friedrichs constant, Friedrichs inequality, Maxwell constant, Maxwell inequality, bounded domain}
\subjclass[2010]{35A23, 26D10, 35Q61}
\date{4 March 2019}
\begin{document}

\begin{abstract}
In this note we derive an upper bound for the constant $\cfA>0$ in the weighted Friedrichs type inequality
\begin{equation*}
	\forall \varphi\in\hoc(\om) \qquad
	\normltom{\varphi} \leq \cfA \sqrt{\scpltom{\A\na\varphi}{\na\varphi}} ,
\end{equation*}
where $\om\subset\reals^d,d\ge1$ is a bounded domain, and $\A$ a bounded, self-adjoint, and uniformly positive definite matrix valued function. The contents of this note follow in a straightforward manner from well known results. In particular, for a constant diagonal matrix $\A$ we obtain the bound
\begin{equation*}
	\cfA \leq
	\brac{\pi\sqrt{\frac{\A_1}{l_1^2}+\cdots+\frac{\A_d}{l_d^2}}}^{-1} ,
\end{equation*}
where $l_i$ are the side lengths of a $d$-interval encompassing $\om$, and $\A_i$ are the diagonal entries of $\A$. Extensions to cases of unbounded domains and partial homogeneous boundary conditions are remarked upon. We also apply the main result in a posteriori error estimation for an elliptic problem and present some numerical results. Lastly, we use the main result to derive an improved upper bound of the tangential Maxwell constant for convex domains.
\end{abstract}

\maketitle
\tableofcontents
\newpage

%====================================================================
%====================================================================

\section{Introduction}
\label{sec:INTRO}

We denote by $x:=(x_1,\ldots,x_d)$ the Euclidean coordinates in $\reals^d,d\ge1$, and by $\om\subset\reals^d$ a bounded domain. The calculations performed in this note are invariant with respect to translations of the domain, so without loss of generality we assume $\om$ to be contained in the open $d$-interval
\begin{equation*}
	I:=\prod_{i=1}^{d} (0,l_i), \qquad 0<l_i<\infty .
\end{equation*}

The space of smooth scalar- or vector-valued functions vanishing on the boundary of the domain is denoted by $\cic(\om)$. We denote by $\scpltom{\,\cdot\,}{\,\cdot\,}$ and $\normltom{\,\cdot\,}$ the inner product and norm for scalar- or vector-valued functions in $\lt(\om)$. We introduce the notation $\scpltomw{\,\cdot\,}{\,\cdot\,}{\rho}:=\scp{\rho\,\cdot\,}{\,\cdot\,}_{\lt(\om)}$, which induces $\normltomw{\,\cdot\,}{\rho}$, where $\rho$ belongs to the space of essentially bounded functions $\li(\om)$. If $\rho$ is self-adjoint and uniformly positive definite, they become an inner product and a norm in $\lt(\om)$, respectively. The space of scalar-valued functions in $\lt(\om)$ with zero mean is defined as
\begin{equation*}
	\ltz(\om) := \bset{\varphi\in\lt(\om)}{ \int_\om \varphi \, \dx = 0 } ,
\end{equation*}
and as usual, for a vector-valued function $\phi$ we write $\phi\in\ltz(\om)$ if all its components belong to $\ltz(\om)$. In the rest of this paper we may drop $\om$ in our notations for brevity, i.e., $\lt := \lt(\om)$.

We define the usual Sobolev spaces
\begin{align*}
	\ho	& := \set{\varphi\in\lt}{\na\varphi\in\lt} ,
				& \hoc & := \ol{\cic}^{\ho} , \\
	\d	& := \set{\phi\in\lt}{\div\phi\in\lt} ,
				& \dc & := \ol{\cic}^{\d} ,
\end{align*}
which are Hilbert spaces. Note that on the former spaces the differential operators are now defined in the usual weak sense. The latter spaces, where the closures are taken with respect to graph norms, generalize the classical homogeneous scalar and normal boundary conditions, respectively.

The Friedrichs inequality reads as
\begin{equation*}
	\forall \varphi \in \hoc \qquad
	\normltom{\varphi} \leq \cf \normltom{\na\varphi} ,
\end{equation*}
where $\cf=\cf(\om)>0$ is called the Friedrichs constant. Note that $\cf$ is assumed to be the best possible, i.e., smallest possible constant for which the Friedrichs inequality holds. A commonly utilized upper bound for $\cf$ is \cite{mikhlinbook}
\begin{equation} \label{eq:cf_mikhlin}
	\cf \leq \brac{\pi \sqrt{\frac{1}{l_1^2}+\cdots+\frac{1}{l_d^2}}}^{-1} .
\end{equation}

This note is dedicated to finding upper bounds for the constant $\cfA=\cfA(\om,\A)>0$ in the weighted Friedrichs type inequality
\begin{equation} \label{eq:cfA}
	\forall \varphi \in \hoc \qquad
	\normltom{\varphi} \leq \cfA \normltomw{\na \varphi}{\A}
\end{equation}
for bounded $\om$. Here $\A\in\li$ is a self-adjoint (i.e., equal to its conjugate transpose), uniformly positive definite matrix valued function $\A:\om\to\reals^{d \times d}$, i.e., it satisfies
\begin{equation} \label{eq:Acond1}
	\exists \ul{\A}>0 \qquad 
	\forall\phi\in\lt \qquad
	\ul{\A} \normltom{\phi}^2
	\leq \scpltom{\A\phi}{\phi} .
\end{equation}
Estimates for $\cfA$ can be calculated by using estimates for $\cf$, since obviously
\begin{equation*}
	\normltom{\varphi}
	\leq \cf \normltom{\na\varphi}
	\leq \frac{\cf}{\sqrt{\ul{\A}}} \normltomw{\na\varphi}{\A}
\end{equation*}
holds. Note that since the first estimation step is done using the Friedrichs inequality, and contains the full gradient on the right hand side, it is inevitable that the final estimation step involves a division with the smallest eigenvalue of $\A$. Estimating further by using \eqref{eq:cf_mikhlin} we obtain the estimate
\begin{equation} \label{eq:coarse}
	\cfA \leq \brac{\pi \sqrt{\ul{\A}\brac{\frac{1}{l_1^2}+\cdots+\frac{1}{l_d^2}}}}^{-1} ,
\end{equation}
which blows up as $\ul{\A}$ approaches zero.

Having computable upper bounds of Friedrichs, Poincar\'e, and Maxwell type constants related to both weighted and non-weighted variants of corresponding inequalities is important in a posteriori error estimation. Error upper bounds typically contain these constants, and are especially important for functional type error estimates, where \emph{guaranteed} upper bounds of the exact error are desired. In this note we omit a literature overview of a posteriori error estimation, and instead refer the reader to the books\cite{NeittaanmakiRepin2004,repinbookone,MaliRepinNeittaanmaki2014,ainsworthodenbookone,verfurthbook2013}.

Some references with upper bounds of Friedrichs and Poincar\'e type constants are the book \cite{mikhlinbook} and the paper \cite{payneweinbergerpoincareconvex} (see also \cite{bebendorfpoincareconvex}). We also cite the interesting survey article \cite{KuznetsovNazarov2015}. Some more recent work on the subject include \cite{vejchodskyfriedrichs2012}, where a weighted Friedrichs inequality similar to \eqref{eq:cfA} is considered. The author calculates numerically two-sided bounds of a Friedrichs type constant in weighted norms. This approach allows for mixed boundary conditions. In \cite{repinpoincare2012} Friedrichs and Poincar\'e inequalities in non-weighted norms with mixed boundary conditions are considered. This approach involves decomposing the domain into smaller subdomains for which Friedrichs and Poincar\'e constants are known. The resulting upper bounds depend on the decomposition. Computable upper bounds of Maxwell constants for convex domains have been studied by the authors of the present note. In the second author's papers \cite{paulymaxconst0,paulymaxconst1,paulymaxconst2} it is shown that the Maxwell constants are bounded by above by the Poincar\'e constant, and a small improvement to these results can be found in the first author's paper \cite{anjamhelmholtz}.

In this note we show that in the case of full homogeneous Dirichlet boundary conditions, there is a simple way to obtain an upper bound of $\cfA$ with better properties than \eqref{eq:coarse}. The upper bound, derived in Section \ref{sec:CF}, follows from well known results. In this section we also demonstrate the benefit of using the improved upper bound of $\cfA$ by a numerical example where we perform a posteriori error estimation of an elliptic problem. In Section \ref{sec:MC} we use this upper bound to improve an upper bound of the tangential Maxwell constant for convex domains in $\reals^3$.

%====================================================================
%====================================================================

\section{A Weighted Friedrichs Inequality}
\label{sec:CF}

The calculations of this section are based on the well known one-dimensional inequality
\begin{equation} \label{eq:onedim}
	\forall \varphi\in\hoc((0,l)) \qquad
	\int_0^l \abs{\varphi(y)}^2 \dy
	\leq \frac{l^2}{\pi^2} \int_0^l \abs{\varphi'(y)}^2 \dy ,
\end{equation}
where $0<l<\infty$. Using this inequality one can proof a Friedrichs type inequality involving only one partial derivative, and by an additional estimation step obtain an inequality involving the full gradient. In the case of bounded domains, this would result in the estimate \eqref{eq:cf_mikhlin}. However, we will need the intermediate result involving only one partial derivative. Note, that since we want to control all partial derivatives separately (with respect to the \emph{already chosen} coordinate system), we cannot rotate the domain.

\begin{lem} \label{lem:PART}
Let $\om$ be bounded, and $i\in\{1,\ldots,d\}$. Then we have the estimate
\begin{equation*}
	\forall \varphi\in\hoc \qquad
	\normltom{\varphi} \leq
	\frac{l_i}{\pi} \normltom{\p_i\varphi} .
\end{equation*}
\end{lem}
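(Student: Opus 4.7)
The plan is to reduce the multidimensional estimate to the one-dimensional inequality \eqref{eq:onedim} by a slicing argument along lines parallel to the $x_i$-axis.

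First I would use density: since $\hoc$ is by definition the closure of $\cic$ in the $\ho$-norm, it suffices to prove the inequality for $\varphi\in\cic(\om)$ and then pass to the limit (both sides of the proposed inequality are continuous with respect to the $\ho$-norm). Given such a $\varphi$, I extend it by zero to the surrounding $d$-interval $I=\prod_{j=1}^{d}(0,l_j)$. Because $\varphi$ has compact support in $\om\subset I$, the zero extension $\tilde\varphi$ lies in $\cic(I)$ and its classical partial derivatives are just the zero extensions of those of $\varphi$; in particular both $L^2$-norms over $\om$ and $I$ coincide.

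Next I would fix an index $i\in\{1,\ldots,d\}$ and write a generic point as $x=(x',y)$ with $y$ standing for the $i$th coordinate and $x'\in\reals^{d-1}$ for the remaining ones. For every fixed $x'$ the slice $y\mapsto\tilde\varphi(x',y)$ is a smooth function on $(0,l_i)$ that vanishes near the endpoints of that interval (since $\tilde\varphi$ has compact support in $I$), hence it belongs to $\hoc((0,l_i))$. The one-dimensional inequality \eqref{eq:onedim} therefore applies on each slice and yields
\begin{equation*}
\int_{0}^{l_i}\abs{\tilde\varphi(x',y)}^2\,\dy
\leq\frac{l_i^2}{\pi^2}\int_{0}^{l_i}\abs{\p_i\tilde\varphi(x',y)}^2\,\dy.
\end{equation*}

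Finally I would integrate this pointwise-in-$x'$ estimate over the remaining $(d-1)$-dimensional slab and invoke Fubini's theorem to recombine the iterated integrals into integrals over $I$. Since the zero extension leaves the $L^2$-norms of $\varphi$ and of $\p_i\varphi$ invariant, this gives exactly
\begin{equation*}
\normltom{\varphi}^2
\leq\frac{l_i^2}{\pi^2}\normltom{\p_i\varphi}^2,
\end{equation*}
which is the claim. A density argument in $\hoc$ then completes the proof. I do not anticipate any real obstacle: the only delicate point is the justification that the zero-extended function genuinely has the expected weak partial derivative $\p_i$ and that its slices enjoy $\hoc((0,l_i))$-regularity, which is why I prefer to carry out the core calculation on the dense subclass $\cic(\om)$ before passing to the limit.
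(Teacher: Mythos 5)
Your proposal is correct and follows essentially the same route as the paper's own proof: zero-extension of $\varphi\in\cic(\om)$ to the enclosing $d$-interval, application of the one-dimensional inequality \eqref{eq:onedim} on each slice parallel to the $x_i$-axis, integration over the remaining variables via Fubini, and a final density argument. The only detail the paper adds is the explicit remark that the complex-valued case follows from the real-valued one.
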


\begin{proof}
Consider first the real valued case and $i=1$. For any $\varphi\in\cic(\om)$ its zero-extension $\hat{\varphi} : I \to \reals$ belongs to $\cic(I)$. For any $\xt:=(x_2,\ldots,x_d)$ belonging to $\tilde{I} := (0,l_2)\times\cdots\times(0,l_d)$, the function $\hat{\varphi}(x_1,\xt)$ is a real valued function of one variable vanishing at the endpoints of the interval $(0,l_1)$, so by \eqref{eq:onedim} we have
\begin{equation*}
	\int_0^{l_1} \abs{\hat{\varphi}(x_1,\xt)}^2 \dx_1
	\leq \frac{l_1^2}{\pi^2} \int_0^{l_1} \abs{\p_1\hat{\varphi}(x_1,\xt)}^2 \dx_1 .
\end{equation*}
By integrating the above with respect to $\xt$ in $\tilde{I}$, we obtain
\begin{equation*}
	\norm{\hat{\varphi}}_{\lt(I)}^2
	\leq \frac{l_1^2}{\pi^2} \norm{\p_1\hat{\varphi}}_{\lt(I)}^2
	\qquad \Rightarrow \qquad
	\norm{\varphi}_{\lt(\om)}^2
	\leq \frac{l_1^2}{\pi^2} \norm{\p_1\varphi}_{\lt(\om)}^2	,
\end{equation*}
since the norms are nonzero only in $\om$. By density the above holds for any $\varphi\in\hoc(\om)$. By an identical procedure the assertion follows for $i\in\{2,\ldots,d\}$ for real valued functions. Having established the assertion for real valued functions, it is clear that it holds for complex valued functions as well.
\end{proof}

We now consider the constant $\cfA$ in the inequality \eqref{eq:cfA}. We assume that $\A\in\li$ is a self-adjoint diagonal matrix
\begin{equation} \label{eq:Adef}
	\A :=
	\begin{pmatrix}
	\A_1 	& 			& 0		\\
	     	& \ddots		&		\\
	0		&			& \A_d
	\end{pmatrix}
\end{equation}
satisfying uniform positive definiteness \eqref{eq:Acond1}, which in this case is equivalent to
\begin{equation} \label{eq:Acond2}
	\exists \ul{\A}_i > 0 \qquad
	\forall \varphi\in\lt \qquad 
	\ul{\A}_i \normltom{\varphi}^2 \leq \scpltom{\A_i\varphi}{\varphi} , \qquad
	i = 1,\ldots,d .
\end{equation}
Note that such an $\alpha$ has no imaginary part, and that $\ul{\A} = \min\{\ul{\A}_1,\ldots,\ul{\A}_d\}$.

\begin{theo} \label{thm:A}
Let $\om$ be bounded and $\A\in\li$ be a self-adjoint diagonal matrix satisfying \eqref{eq:Adef}--\eqref{eq:Acond2}. Then we have the estimate
\begin{equation*}
	\cfA \leq
	\brac{\pi\sqrt{\frac{\ul{\A}_1}{l_1^2}+\cdots+\frac{\ul{\A}_d}{l_d^2}}}^{-1} .
\end{equation*}
\end{theo}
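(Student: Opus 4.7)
The plan is to combine Lemma \ref{lem:PART} with the diagonal structure of $\A$ via a suitably weighted convex combination. The starting observation is that for diagonal $\A$ satisfying \eqref{eq:Acond2}, the weighted Dirichlet form splits componentwise,
\begin{equation*}
	\scpltom{\A\na\varphi}{\na\varphi}
	= \sum_{i=1}^d \scpltom{\A_i\,\p_i\varphi}{\p_i\varphi}
	\ge \sum_{i=1}^d \ul{\A}_i\,\normltom{\p_i\varphi}^2 ,
\end{equation*}
so it suffices to estimate $\normltom{\varphi}^2$ by a multiple of $\sum_i \ul{\A}_i\normltom{\p_i\varphi}^2$.

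From Lemma \ref{lem:PART} we have $\normltom{\varphi}^2 \le (l_i^2/\pi^2)\normltom{\p_i\varphi}^2$ for every $i\in\{1,\ldots,d\}$. To turn this family of bounds into a single one matching the weighted right-hand side, I would take a convex combination: for any positive weights $c_1,\dots,c_d$, writing $\lambda_i:=c_i/\sum_j c_j$ gives
\begin{equation*}
	\normltom{\varphi}^2
	= \sum_{i=1}^d \lambda_i \normltom{\varphi}^2
	\le \frac{1}{\pi^2 \sum_j c_j} \sum_{i=1}^d c_i l_i^2 \normltom{\p_i\varphi}^2 .
\end{equation*}

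The main (and only nontrivial) step is choosing the weights so that the $i$-th coefficient becomes a uniform multiple of $\ul{\A}_i$. Setting $c_i:=\ul{\A}_i/l_i^2$ makes $c_i l_i^2 = \ul{\A}_i$, which is exactly what is needed. With this choice the inequality becomes
\begin{equation*}
	\normltom{\varphi}^2
	\le \frac{1}{\pi^2 \sum_{j=1}^d \ul{\A}_j/l_j^2} \sum_{i=1}^d \ul{\A}_i \normltom{\p_i\varphi}^2
	\le \frac{1}{\pi^2 \sum_{j=1}^d \ul{\A}_j/l_j^2}\,\scpltom{\A\na\varphi}{\na\varphi} .
\end{equation*}
Taking square roots and comparing with the definition of $\cfA$ in \eqref{eq:cfA} yields the asserted bound.

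I do not expect any analytic obstacle: density (already handled in Lemma \ref{lem:PART}) allows us to pass from $\cic$ to $\hoc$, and the scalar/complex case follows by applying the real estimate to real and imaginary parts. The only thing that requires care is the weight selection, and the natural ansatz $c_i\propto\ul{\A}_i/l_i^2$ is essentially forced by dimensional consistency between the one-dimensional Friedrichs constant $l_i/\pi$ and the $i$-th diagonal lower bound $\ul{\A}_i$.
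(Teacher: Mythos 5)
Your proposal is correct and is essentially identical to the paper's proof: the paper also applies Lemma \ref{lem:PART} componentwise, multiplies the $i$-th inequality by $\ul{\A}_i/l_i^2$, and sums, which is precisely your convex combination with weights $c_i=\ul{\A}_i/l_i^2$ written without the normalization. The choice of weights and the final comparison with the weighted Dirichlet form coincide with the paper's argument.
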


\begin{proof}
Let $\varphi\in\hoc$. Since $\A$ is diagonal, the weighted norm can be written as
\begin{equation*}
	\normltomw{\na\varphi}{\A}^2
	= \normltomw{\p_1\varphi}{\A_1}^2 + \cdots + \normltomw{\p_d\varphi}{\A_d}^2 .
\end{equation*}
Lemma \ref{lem:PART} gives
\begin{equation*}
	\normltom{\varphi}^2
	\leq \frac{l_i^2}{\pi^2} \normltom{\p_i\varphi}^2
	\leq \frac{l_i^2}{\pi^2\ul{\A}_i} \normltomw{\p_i\varphi}{\A_i}^2
\end{equation*}
for any $i\in\{1,\ldots,d\}$. By multiplying the above by $\ul{\A}_i/l_i^2$ and summing up the $d$ inequalities, we obtain
\begin{equation*}
	\brac{\frac{\ul{\A}_1}{l_1^2}+\cdots+\frac{\ul{\A}_d}{l_d^2}} \normltom{\varphi}^2
	\leq \frac{1}{\pi^2} \normltomw{\na\varphi}{\A}^2 ,
\end{equation*}
which implies the assertion.
\end{proof}

\begin{rem} \label{rem:A}
\mbox{}
\begin{itemize}
\item[\bf(i)] Theorem \ref{thm:A} with $\A=\id$ results in the estimate \eqref{eq:cf_mikhlin}.
\item[\bf(ii)] It is easy to see that the upper bound of Theorem \ref{thm:A} is always smaller or equal to the upper bound \eqref{eq:coarse}.
\item[\bf(iii)] The above procedure furnishes upper bounds of $\cfA$ even when the diagonal matrix $\A$ is not uniformly positive definite (see Appendix \ref{app:semidef}).
\item[\bf(iv)] The result of Lemma \ref{lem:PART}, and thus also Theorem \ref{thm:A}, holds also for an unbounded domain lying between two parallel hyperplanes, provided that the hyperplanes are not parallel to \emph{any} coordinate axes. The reason for this limitation is because in the proof of Lemma \ref{lem:PART} we need \eqref{eq:onedim} in the direction of all the coordinate axes. Note that in this case the constants $l_i$ denote the distance of these two hyperplanes measured by a line parallel to the $x_i$ axis. The proof of this result is only slightly more involved.
\item[\bf(v)] An upper bound similar to Theorem \ref{thm:A} for bounded domains can be obtained provided that the homogeneous Dirichlet boundary conditions hold in at least one direction. I.e., for the unit square in $\reals^2$, it is enough that one pair of opposing boundaries have the boundary condition; if this pair is the one parallel to the $x_2$-axis, then the boundary condition is in the direction of the $x_1$-axis, and we have $\cfA \leq (\pi\sqrt{\ul{\alpha}_1/l_1^2})^{-1}$. However, since the opposing boundary parts must have a "straight line of sight" to each other, the possible domains are quite limited. More variety in domains is achieved, if one uses (instead of \eqref{eq:onedim})
\begin{equation*}
	\int_0^l \abs{\varphi(y)}^2 \dy
	\leq \frac{l^2}{2} \int_0^l \abs{\varphi'(y)}^2 \dy ,
\end{equation*}
which holds for all functions $\varphi\in\ho((0,l))$ vanishing either on the beginning  or the end of the interval. In this way only one of the opposing boundaries need to have the boundary condition. For this inequality see, e.g., \cite[p. 158]{adamsbook}.
\end{itemize}
\end{rem}

Under certain conditions non-diagonal $\A$ can be handled as well. For readability we consider only the three dimensional case. For any self-adjoint $\A(x)=\{\A_{ij}(x)\}_{i,j=1}^3$ from $\li$ we define
\begin{equation} \label{eq:At}
	\At :=
	\begin{pmatrix}
		\At_1 	& 0		& 0		\\
		0     	& \At_2	& 0		\\
		0		& 0		& \At_3
	\end{pmatrix} ,
	\qquad
	\begin{matrix}
		\At_1 := \A_{11} - (\abs{\Re\A_{12}} + \abs{\Im\A_{12}} + \abs{\Re\A_{13}} + \abs{\Im\A_{13}}) , \\
		\At_2 := \A_{22} - (\abs{\Re\A_{12}} + \abs{\Im\A_{12}} + \abs{\Re\A_{23}} + \abs{\Im\A_{23}}) , \\
		\At_3 := \A_{33} - (\abs{\Re\A_{13}} + \abs{\Im\A_{13}} + \abs{\Re\A_{23}} + \abs{\Im\A_{23}}) .
	\end{matrix}
\end{equation}
It is easy to verify that $\At$ is self-adjoint, and that
\begin{equation} \label{eq:lowerbound}
	\forall\phi\in\lt \qquad
	\normltomw{\phi}{\At} \leq \normltomw{\phi}{\A}
\end{equation}
holds. If $\At$ is also uniformly positive definite, i.e., it satisfies
\begin{equation*}
	\exists \ul{\At}_i > 0 \qquad
	\forall \varphi\in\lt \qquad 
	\ul{\At}_i \normltom{\varphi}^2 \leq \scpltom{\At_i\varphi}{\varphi} , \qquad
	i = 1,2,3 ,
\end{equation*}
we can directly use Theorem \ref{thm:A} to obtain an estimate of $\cfA$.

\begin{theo} \label{thm:A2}
Let $\om\subset\reals^3$ be bounded and $\A\in\li$ be a self-adjoint matrix valued function for which $\At$, defined by \eqref{eq:At}, is uniformly positive definite. Then we have the estimate
\begin{equation*}
	\cfA \leq
	\brac{\pi\sqrt{\frac{\ul{\tilde{\A}}_1}{l_1^2}
		+ \frac{\ul{\tilde{\A}}_2}{l_2^2}
		+ \frac{\ul{\tilde{\A}}_3}{l_3^2}}}^{-1} .
\end{equation*}
\end{theo}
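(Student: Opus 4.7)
The plan is to chain together two facts that are already available: the diagonal Theorem~\ref{thm:A} applied to $\At$, and the pointwise domination \eqref{eq:lowerbound} of the $\A$-weighted norm by the $\At$-weighted one. Since $\At$ is constructed to be a self-adjoint diagonal matrix in $\li$, and the hypothesis of the theorem guarantees that it is uniformly positive definite with lower bounds $\ul{\At}_1,\ul{\At}_2,\ul{\At}_3$, Theorem~\ref{thm:A} applies verbatim to $\At$ and yields
\begin{equation*}
	\forall\varphi\in\hoc\qquad
	\normltom{\varphi}
	\leq
	\brac{\pi\sqrt{\frac{\ul{\At}_1}{l_1^2}+\frac{\ul{\At}_2}{l_2^2}+\frac{\ul{\At}_3}{l_3^2}}}^{-1}
	\normltomw{\na\varphi}{\At}.
\end{equation*}

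Next, I apply the domination \eqref{eq:lowerbound}, which the paper notes is straightforward to check, with the vector-valued choice $\phi:=\na\varphi\in\lt$. This gives $\normltomw{\na\varphi}{\At}\leq\normltomw{\na\varphi}{\A}$, and inserting this on the right-hand side of the estimate above produces exactly the claimed Friedrichs-type inequality for $\A$. Since $\cfA$ is by definition the best (smallest) constant in \eqref{eq:cfA}, the upper bound on $\cfA$ stated in the theorem follows.

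The only step with any substance is the comparison \eqref{eq:lowerbound}; everything else is a mechanical composition. If one wanted to be explicit, the verification of \eqref{eq:lowerbound} reduces to showing that $\A-\At$ is pointwise positive semidefinite, which is a routine application of the elementary bound $|2\,\Re(\overline{a}b)|\leq (|\Re a_{\mathrm{re}}|+|\Im a_{\mathrm{re}}|)|b|^2 + \cdots$ style inequality encoded in the definition of $\At_i$. Since the theorem statement assumes $\At$ is uniformly positive definite, no further verification is needed in the proof itself, so the argument truly is a short two-line combination of Theorem~\ref{thm:A} and \eqref{eq:lowerbound}.
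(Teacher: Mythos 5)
Your proof is correct and is essentially identical to the paper's own argument: apply Theorem \ref{thm:A} to the diagonal matrix $\At$ and then use the domination \eqref{eq:lowerbound} with $\phi=\na\varphi$. The aside about verifying \eqref{eq:lowerbound} via positive semidefiniteness of $\A-\At$ matches what the paper leaves as an easy check, so nothing is missing.
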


\begin{proof}
Let $\varphi\in\hoc$. Theorem \ref{thm:A} gives
\begin{equation*}
	\normltom{\varphi}
	\leq \brac{\pi\sqrt{\frac{\ul{\tilde{\A}}_1}{l_1^2}
		+ \frac{\ul{\tilde{\A}}_2}{l_2^2}
		+ \frac{\ul{\tilde{\A}}_3}{l_3^2}}}^{-1}
		\normltomw{\na\varphi}{\At} ,
\end{equation*}
and with \eqref{eq:lowerbound} we have the assertion.
\end{proof}

\begin{rem} \label{rem:A2}
For $\At$ to be uniformly positive definite would require that the off-diagonal entries of $\A$ be comparatively small compared to its diagonal entries. However, now Remark \ref{rem:A} (iii) holds with respect to $\At$. In particular, for an upper bound of $\cfA$ it is enough that $\At$ is positive semi-definite such that \emph{one} of the diagonal entries of $\At$ is uniformly positive definite.
\end{rem}

We demonstrate the derived results in the real valued setting through some examples.

% EXAMPLE: DEPENDENCE ON A CONSTANT MATRIX
\begin{ex}[Diagonal matrix $\A$] \label{ex:calpha}
Let $\om\subset(0,1)^2$ and $\A$ be the uniformly positive definite constant matrix
\begin{equation*}
	\A = \begin{pmatrix} 1 & 0 \\ 0 & \delta \end{pmatrix} , \qquad \delta>0 .
\end{equation*}
The estimate \eqref{eq:coarse} gives the upper bound
\begin{equation} \label{eq:calpha1}
	\cfA \leq \big( \pi\sqrt{2\min\{1,\delta\}} \big)^{-1} ,
\end{equation}
and Theorem \ref{thm:A} gives
\begin{equation} \label{eq:calpha2}
	\cfA \leq
	\big( \pi\sqrt{1+\delta} \big)^{-1} .
\end{equation}
It is easy to see that the latter does not blow up as $\delta$ becomes smaller. Table \ref{tbl:calpha} shows the values of the bounds with different $\delta$.
\end{ex}

\begin{table}[h]
\caption{Example \ref{ex:calpha}: Values of the upper bounds \eqref{eq:calpha1} and \eqref{eq:calpha2} with different $\delta$.}
\begin{tabular}{c|r|r|r|r|r|r|r}
$\delta$ & \multicolumn{1}{c|}{$10^{-6}$} & \multicolumn{1}{c|}{$10^{-4}$} & \multicolumn{1}{c|}{$10^{-2}$} & \multicolumn{1}{c|}{$1$} & \multicolumn{1}{c|}{$10^{2}$} & \multicolumn{1}{c|}{$10^{4}$} & \multicolumn{1}{c}{$10^{6}$} \\ 
\hline
\eqref{eq:calpha1} & 225.07908 & 22.50791 & 2.25079 & 0.22508 & 0.22508 & 0.22508 & 0.22508 \\
\eqref{eq:calpha2} & 0.31831 & 0.31829 & 0.31673 & 0.22508 & 0.03167 & 0.00318 & 0.00032
\end{tabular}
\label{tbl:calpha}
\end{table}

% EXAMPLE: SOLUTION THEORY
\begin{ex}[Solution theory for a reaction-diffusion problem] \label{ex:solution}
Consider the following reaction-diffusion problem: find $u\in\hoc$ satisfying
\begin{equation*}
	-\div\A\na u + \rho\,u = f ,
\end{equation*}
where $f\in\lt$, $\rho\in\reals$, and $\A\in\li$ is a symmetric uniformly positive definite matrix valued function. The variational formulation of this problem reads as
\begin{equation*}
	\forall \varphi\in\hoc \qquad
	\scpltomw{\na u}{\na \varphi}{\A}
	+ \scpltomw{u}{\varphi}{\rho}
	= \scpltom{f}{\varphi} .
\end{equation*}
By setting $\varphi=u$ in the bilinear form on the left hand side, we obtain
\begin{align*}
	\scpltomw{\na u}{\na u}{\A} + \scpltomw{u}{u}{\rho}
	& = (1-\epsilon) \normltomw{\na u}{\A}^2
		+ \epsilon \normltomw{\na u}{\A}^2
		+ \normltomw{u}{\rho}^2 \\
	& \ge (1-\epsilon) \ul{\A} \normltom{\na u}^2
		+ \bigg( \frac{\epsilon}{\cfA^2}+\rho \bigg) \normltom{u}^2 ,
\end{align*}
where $0<\epsilon<1$. We observe that this form is coercive provided that
\begin{equation*}
	\frac{\epsilon}{\cfA^2}+\rho > 0
\end{equation*}
holds, and under this condition a unique solution exists in $\hoc$ by the Riesz representation theorem. Let $\om\subset(0,1)^2$ and
\begin{equation*}
	\A = \begin{pmatrix} 1 & 0 \\ 0 & 100 \end{pmatrix} .
\end{equation*}
Using \eqref{eq:coarse} to estimate $\cfA$ (see Example \ref{ex:calpha}), we see that for existence and uniqueness of a solution, the necessary condition is $\rho > - \epsilon\,2\pi^2$, but using Theorem \ref{thm:A} the necessary condition becomes $\rho > - \epsilon\,101\pi^2$, allowing for a larger range of admissible $\rho$.
\end{ex}

% EXAMPLE: A NON-DIAGONAL MATRIX
\begin{ex}[Non-diagonal matrix $\A$] \label{ex:calpha2}
Let $\om\subset(0,1)^3$, and
\begin{equation*}
	\A = \begin{pmatrix} 3 & 1 & 1 \\ 1 & 300 & 1 \\ 1 & 1 & 3 \end{pmatrix} ,
	\qquad
	\At = \begin{pmatrix} 1 & 0 & 0 \\ 0 & 298 & 0 \\ 0 & 0 & 1 \end{pmatrix} ,
\end{equation*}
where $\At$ is calculated according to \eqref{eq:At}. Now $\ul{\A}=2$, and \eqref{eq:coarse} gives the bound $\cfA \leq \brac{\pi\sqrt{6}}^{-1}$. Theorem \ref{thm:A2} gives the upper bound $\cfA \leq \brac{\pi\sqrt{300}}^{-1}$, which is sharper.
\end{ex}

As stated in the introduction, the motivation for deriving computable upper bounds for the constant $\cfA$ is that it is essential in a posteriori error estimation for numerical approximations of elliptic partial differential equations. As an example we consider the diffusion problem in the real valued setting, in a bounded domain $\om$, with homogeneous Dirichlet boundary conditions on the whole boundary: find $u\in\hoc$ satisfying
\begin{equation*}
	- \div \A\na u = f ,
\end{equation*}
where $f\in\lt$ and $\A\in\li$ is a symmetric uniformly positive definite matrix valued function. The variational formulation for this problem reads as
\begin{equation} \label{eq:var}
	\forall \varphi\in\hoc \qquad
	\scpltomw{\na u}{\na \varphi}{\A} = \scpltom{f}{\varphi} .
\end{equation}
Since \eqref{eq:cfA} is satisfied, a unique solution $u\in\hoc$ exists by the Riesz representation theorem. By setting $\varphi=u$ in \eqref{eq:var} we see that the solution depends continuously on the right hand side:
\begin{equation*}
	\normltomw{\na u}{\A}^2
	= \scpltom{f}{u}
	\leq \normltom{f}\normltom{u}
	\leq \cfA\normltom{f} \normltomw{\na u}{\A}
	\quad
	\Rightarrow
	\quad
	\normltomw{\na u}{\A} \leq \cfA \normltom{f} .
\end{equation*}

We now present the functional type a posteriori error upper bound, which can be found in, e.g., the books \cite{NeittaanmakiRepin2004,repinbookone,MaliRepinNeittaanmaki2014}.

\begin{theo} \label{thm:apost}
Let $\ut\in\hoc$ be an arbitrary approximation of $u$, and $\cfAt$ be any approximation of $\cfA$ from above. Then we have the estimate
\begin{equation*}
	\forall y\in\d \qquad
	\normltomw{\na(u-\ut)}{\A} \leq \cfAt \normltom{f+\div y}
	+ \normltomwi{y-\A\na\ut}{\A} := M(\cfAt,\ut,y) .
\end{equation*}
\end{theo}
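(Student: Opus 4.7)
The plan is to expand the squared weighted energy error $\normltomw{\na(u-\ut)}{\A}^2$ via the variational identity, introduce the auxiliary flux $y\in\d$ through an integration-by-parts manoeuvre, and then bound the resulting two terms by Cauchy--Schwarz, one in the unweighted $\lt$ pairing (using the weighted Friedrichs inequality) and one in the $\A^{-1}$-versus-$\A$ weighted pairing. To begin, I would test the variational identity \eqref{eq:var} with $\varphi=u-\ut\in\hoc$ and subtract $\scpltomw{\na\ut}{\na(u-\ut)}{\A}$ from both sides to obtain
\[
\normltomw{\na(u-\ut)}{\A}^2 = \scpltom{f}{u-\ut} - \scpltomw{\na\ut}{\na(u-\ut)}{\A}.
\]

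Next, for any $y\in\d$, the integration-by-parts formula $\scpltom{y}{\na\varphi} = -\scpltom{\div y}{\varphi}$ holds for every $\varphi\in\hoc$ (by density from $\cic$), so setting $\varphi=u-\ut$ gives $\scpltom{f}{u-\ut} = \scpltom{f+\div y}{u-\ut} + \scpltom{y}{\na(u-\ut)}$. Substituting into the identity above and regrouping the flux terms would produce the key decomposition
\[
\normltomw{\na(u-\ut)}{\A}^2 = \scpltom{f+\div y}{u-\ut} + \scpltom{y-\A\na\ut}{\na(u-\ut)}.
\]

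Finally, I would estimate each summand. For the first, plain Cauchy--Schwarz in $\lt$ followed by the weighted Friedrichs inequality \eqref{eq:cfA} together with the hypothesis $\cfA\leq\cfAt$ yields $\scpltom{f+\div y}{u-\ut} \leq \cfAt\normltom{f+\div y}\normltomw{\na(u-\ut)}{\A}$. For the second, I would exploit that $\A$ is self-adjoint and uniformly positive definite to factor the unweighted pairing symmetrically, $\scpltom{z}{w} = \scpltomw{\A^{-1}z}{w}{\A}$, so that Cauchy--Schwarz in the $\A$-weighted inner product delivers $\scpltom{y-\A\na\ut}{\na(u-\ut)} \leq \normltomwi{y-\A\na\ut}{\A}\normltomw{\na(u-\ut)}{\A}$. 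Inserting both bounds into the decomposition and dividing through by $\normltomw{\na(u-\ut)}{\A}$ (the case where this factor vanishes being trivial) yields the claim. The only mildly subtle step is the symmetric factorisation in the flux term, which identifies the natural norms on either side; the rest is routine manipulation of the variational identity.
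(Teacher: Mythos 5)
Your proposal is correct and follows essentially the same route as the paper: the same introduction of $y$ via integration by parts, the same decomposition into the terms $\scpltom{f+\div y}{\,\cdot\,}$ and $\scpltom{y-\A\na\ut}{\na\,\cdot\,}$, and the same pair of Cauchy--Schwarz estimates (unweighted plus Friedrichs for the first, $\A^{-1}$-versus-$\A$ weighted for the second). The only difference is cosmetic --- you substitute $\varphi=u-\ut$ at the start rather than at the end --- and you correctly flag the trivial case when the energy error vanishes.
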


\begin{proof}
We begin by subtracting the term $\scpltomw{\na\ut}{\na\varphi}{\A}$ from both sides of \eqref{eq:var} and obtain
\begin{align*}
	\scpltomw{\na(u-\ut)}{\na \varphi}{\A}
	& = \scpltom{f}{\varphi} - \scpltomw{\na\ut}{\na \varphi}{\A} \\
	& = \scpltom{f+\div y}{\varphi} + \scpltom{y-\A\na\ut}{\na\varphi} \\
	& \leq \normltom{f+\div y} \normltom{\varphi}
		+ \normltomwi{y-\A\na\ut}{\A} \normltomw{\na\varphi}{\A} \\
	& \leq \brac{ \cfA \normltom{f+\div y} + \normltomwi{y-\A\na\ut}{\A} }
		\normltomw{\na\varphi}{\A} ,
\end{align*}
where we used $\scpltom{\div y}{\varphi} + \scpltom{y}{\na\varphi} = 0$ and \eqref{eq:cfA}. Setting $\varphi=u-\ut$ finishes the proof.
\end{proof}

\begin{rem}
By using the upper bound $\cfA \leq \cf/\sqrt{\ul{\A}}$ for the value of $\cfAt$ we obtain the most commonly used form of this functional type a posteriori error upper bound for the diffusion problem.
\end{rem}

Note that the above estimate is sharp, i.e., theoretically there is no gap between the exact error and the estimate. This is seen by setting $y=\A\na u\in\d$. The first term of the error functional $M$ vanishes, and it becomes apparent that sharpness does not depend on $\cfA$. However, obtaining good error bounds requires not only choosing $y$ close to the exact flux $\A\na u$, but also having good upper bounds for the unknown constant $\cfA$. Especially in the case when $-\div y$ is not close to $f$, a large over-estimation of the constant $\cfA$ will lead to a large over-estimation of the error, as we will now demonstrate.

% EXAMPLE: ERROR ESTIMATE WITH AVERAGING
\begin{ex}[Error estimation with Raviart-Thomas averaging] \label{ex:apost_avg}
We solve the diffusion problem \eqref{eq:var} in in the L-shaped domain $\om=(0,1)^2\setminus[(1/2,1)\times(0,1/2)]$ with
\begin{equation*}
	\A = \begin{pmatrix} 1 & 0 \\ 0 & 10^{-4} \end{pmatrix} ,
	\qquad
	f = 1 .
\end{equation*}
We use linear nodal finite elements in triangles to solve \eqref{eq:var}, and denote the approximation by $\ut$. The function $y$ in the functional $M$ is obtained by averaging $\A\na\ut$ to the edges of the mesh resulting in a function from the linear Raviart-Thomas finite element space, which is a subspace of $\d$. We denote this averaging operator by $G_{\rm RT}$. Using \eqref{eq:coarse} to estimate the value of $\cfA$ (see Example \ref{ex:calpha}), we have the estimate
\begin{equation} \label{eq:apost_avg1}
	\normltomw{\na(u-\ut)}{\A} \leq M(22.50791,\ut,G_{\rm RT}(\A\na\ut)) ,
\end{equation}
and by using Theorem \ref{thm:A} we obtain the estimate
\begin{equation} \label{eq:apost_avg2}
	\normltomw{\na(u-\ut)}{\A} \leq M(0.31829,\ut,G_{\rm RT}(\A\na\ut)) .
\end{equation}
Since $- \div G_{\rm RT}(\A\na\ut)$ is only a rough approximation of $f$, the quality of the latter estimate is better, as is seen from Table \ref{tbl:apost_avg}.
\end{ex}

\begin{table}[h]
\caption{Example \ref{ex:apost_avg}: Values of the upper bounds \eqref{eq:apost_avg1} and \eqref{eq:apost_avg2} with different meshes.}
\begin{tabular}{c|r|r|r|r|r}
\#elements & \multicolumn{1}{c|}{$384$} & \multicolumn{1}{c|}{$1536$} & \multicolumn{1}{c|}{$6144$} & \multicolumn{1}{c|}{$24576$} & \multicolumn{1}{c}{$98304$} \\ 
\hline
\eqref{eq:apost_avg1} & 18.4444 & 17.1419 & 16.1891 & 14.9832 & 13.2664 \\
\eqref{eq:apost_avg2} & 1.5563 & 0.9166 & 0.5705 & 0.3809 & 0.2695 \\
\end{tabular}
\label{tbl:apost_avg}
\end{table}

%====================================================================
%====================================================================

\section{The Tangential Maxwell Inequality for Convex Domains in $\reals^3$}
\label{sec:MC}

In this section, after introducing some additional notation, we improve an upper bound of the tangential Maxwell constant using Theorem \ref{thm:A}. Throughout, we work in three dimensions, i.e., the domain $\om$ belongs to $\reals^3$.

Aside from the Sobolev spaces already defined in the introduction, we also define
\begin{align*}
	\r	& := \set{\phi\in\lt}{\rot\phi\in\lt} ,
				& \rc & := \ol{\cic}^{\r} ,
\end{align*}
which are Hilbert spaces. As before, on the former space the differential operator $\rot$ is defined in the usual weak sense. The latter space, where the closure is taken with respect to the graph norm, generalizes the classical tangential boundary condition. Note that the rotation $\rot$ is often written as $\curl$ or $\na\times$ in the literature.

Let $\eps\in\li$ be a self-adjoint uniformly positive definite function $\eps:\om\to\reals^{3\times3}$, i.e., it satisfies
\begin{equation*}
	\exists \ul{\eps},\ol{\eps} > 0 \qquad
	\forall\phi\in\lt \qquad 
	\ul{\eps} \normltom{\phi}^2 \leq
	\scpltom{\eps\phi}{\phi} \leq
	\ol{\eps} \normltom{\phi}^2 .
\end{equation*}
Note that here the overline in $\ol{\eps}$ does not denote complex conjugation. In this section the properties assumed from $\eps$ are similar to what was assumed for $\A$ in the previous section. We use $\eps$ instead of $\A$ to conform to the usual notation used in electromagnetic theory where $\eps$ denotes the electric permittivity of the media.

Since $\om$ is convex, it is also Lipschitz \cite{grisvard1985}, and Rellich's selection theorem and Weck's selection theorem \cite{weck} hold. Thus the spaces in the well known Helmholtz decompositions (see, e.g., \cite{leisbook})
\begin{align}
	\lt & = \na\hoc \opluse \inv{\eps} \dz
			= \rcz \opluse \inv{\eps} \rot\r ,
			\qquad \na\hoc = \rcz , \qquad \dz = \rot\r, \label{eq:hdconvex1} \\
	\lt & = \na\ho \opluse \inv{\eps} \dcz
			= \rz \opluse \inv{\eps} \rot\rc ,
			\qquad \na\ho = \rz , \qquad \dcz = \rot\rc \label{eq:hdconvex2}
\end{align}
are closed. Here $\opluse$ denotes orthogonal sum with respect to the weighted scalar product $\scpltomw{\,\cdot\,}{\,\cdot\,}{\eps}$. If $\eps=\id$ in these decompositions, we omit it, i.e., we write $\oplus$ instead of $\oplus_\id$.

The Poincar\'e and tangential Maxwell estimates read as
\begin{align*}
	& \forall \varphi \in \ho \cap \ltz & &
	\normltom{\varphi} \leq \cp \normltom{\na\varphi} , \\
	& \forall \phi \in \rc \cap \inv{\eps}\d & &
	\normltomw{\phi}{\eps} \leq \cme \sqrt{ \normltom{\div\eps\phi}^2 + \normltom{\rot\phi}^2 } ,
\end{align*}
where $\cp=\cp(\om)>0$ is the Poincar\'e constant and $\cme=\cme(\om,\eps)>0$ the tangential Maxwell constant. For convex domains we have the computable upper bound
\begin{equation} \label{eq:poincarebound}
	\cp \leq \frac{\diam(\om)}{\pi}
\end{equation}
by Payne and Weinberger \cite{payneweinbergerpoincareconvex} (see also \cite{bebendorfpoincareconvex}). In \cite{paulymaxconst1} (see also \cite{paulymaxconst0,paulymaxconst2,anjamhelmholtz}) it was shown that
\begin{equation} \label{eq:maxbound1}
	\cme \leq \max\left\{ \frac{\cf}{\sqrt{\ul{\eps}}} , \sqrt{\ol{\eps}}\cp \right\} ,
\end{equation}
which, together with \eqref{eq:cf_mikhlin} and \eqref{eq:poincarebound}, gives the computable upper bound
\begin{equation} \label{eq:TMC_bound_coarse}
	\cme \leq \frac{1}{\pi} \max \Bigg\{ \sqrt{\ul{\eps}\brac{\frac{1}{l_1^2}+\frac{1}{l_2^2}+\frac{1}{l_3^2}}}^{-1}, \sqrt{\ol{\eps}} \diam(\om) \Bigg\} .
\end{equation}
As $\ul{\eps}$ approaches zero, this bound blows up, as does \eqref{eq:coarse}. In the following we improve this bound so that this does not happen. The rest of this section essentially follows the sequence found in \cite{paulymaxconst1}, with a few small differences to allow for the improved bound.

\begin{lem} \label{lem:horeg}
Let $\om\subset\reals^3$ be convex and
$\phi\in\rc\cap\d$ or
$\phi\in\r\cap\,\dc$. Then $\phi\in\ho$ and
$\normltom{\na\phi}^2 \leq \normltom{\div\phi}^2 + \normltom{\rot\phi}^2$.
\end{lem}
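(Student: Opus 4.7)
The plan is to establish the integration by parts identity
\begin{equation*}
	\normltom{\na\phi}^2
	= \normltom{\div\phi}^2 + \normltom{\rot\phi}^2
	- 2\int_{\p\om}\!\mathcal{B}(\phi,\phi)\,\mathrm{d}s
\end{equation*}
for smooth fields $\phi$ on a smooth domain, where $\mathcal{B}$ is a bilinear form on the tangential (respectively normal) trace of $\phi$ built from the second fundamental form of $\p\om$. For the tangential case ($\phi\in\rc$, so that the tangential trace $\nu\times\phi$ vanishes), $\phi$ itself is normal on $\p\om$, and a short computation with $\rot\phi$ and $\div\phi$ shows $\mathcal{B}(\phi,\phi)$ reduces to $\mathcal{W}(\phi_T,\phi_T)$ where $\mathcal{W}$ is the Weingarten map; by convexity $\mathcal{W}\ge0$, so the boundary integrand is nonnegative. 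The dual computation works for the normal case $\phi\in\dc$.

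First I would verify the identity on a $C^2$ (or smoother) convex domain for $\phi\in\cic(\overline\om)^3$ satisfying the appropriate boundary condition, by integrating by parts twice; this is the Gaffney/Friedrichs calculation and uses only that $\rot\rot = -\Delta + \na\div$ plus the product rule. The result is the identity above with explicit boundary term. Then I would invoke convexity to drop the boundary term, yielding
\begin{equation*}
	\normltom{\na\phi}^2 \leq \normltom{\div\phi}^2 + \normltom{\rot\phi}^2
\end{equation*}
on smooth convex domains for smooth fields satisfying the boundary condition.

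Next I would extend to general $\phi\in\rc\cap\d$ (respectively $\r\cap\dc$) on a smooth convex $\om$ by density of $\cic(\overline\om)^3$-fields satisfying the boundary condition in the graph norm of $\rc\cap\d$; the estimate passes to the limit, which simultaneously yields $\phi\in\ho$ and the asserted inequality. Finally, to treat a general (possibly nonsmooth) convex $\om$, I would approximate $\om$ by a decreasing sequence of smooth convex domains $\om_k\supset\om$ (which exists by standard mollification of the support function), extend $\phi$ by $0$ outside $\om$; because the extension preserves membership in $\rc$ or $\dc$ respectively (the homogeneous trace lets the zero extension stay in the space on $\om_k$), apply the smooth-domain result on $\om_k$, and restrict back to $\om$.

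The main obstacle is the approximation of the convex domain by smooth convex domains together with the extension of $\phi$ so that the relevant boundary condition is preserved: one must choose the approximating domains carefully (e.g.\ Minkowski sums $\om+B_{1/k}$) and verify that $\phi$, once extended by zero, really lies in $\rc(\om_k)\cap\d(\om_k)$ (or in $\r(\om_k)\cap\dc(\om_k)$). Once this is done, the convexity-induced nonnegativity of the boundary term makes the inequality survive the limit $k\to\infty$, and lower semicontinuity of the $\lt$-norm of $\na\phi$ delivers both the $\ho$-regularity and the estimate.
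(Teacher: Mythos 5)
The paper itself does not prove this lemma; it is quoted from Amrouche--Bernardi--Dauge--Girault (Theorem 2.17 there), so any proof you give is necessarily your own route. The first half of your plan is the classical one and is sound: the Gaffney/Friedrichs identity with a curvature boundary term on a $C^2$ domain, the sign of that term under convexity (modulo a small mix-up: for $\phi\in\rc$, i.e. $\nu\times\phi=0$, the field is normal at the boundary and the surviving boundary term is the mean curvature times $\abs{\nu\cdot\phi}^2$, whereas the Weingarten form $\mathcal{W}(\phi_T,\phi_T)$ belongs to the case $\nu\cdot\phi=0$; both are nonnegative on a convex domain, so the conclusion is unaffected), and the density of smooth fields satisfying the respective boundary condition in the graph norm on a smooth domain.

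The genuine gap sits exactly at the step you yourself flagged as the main obstacle: the zero extension does \emph{not} stay in the right space, and this is not a technicality to be verified but a statement that is false in general. If $\phi\in\rc(\om)\cap\d(\om)$ and $\tilde\phi$ is its extension by zero to a smooth convex $\om_k\supset\ol{\om}$, then $\tilde\phi\in\rc(\om_k)$ (the vanishing tangential trace kills the surface term in the distributional rotation), but
\begin{equation*}
	\div\tilde\phi \;=\; \widetilde{\div\phi} \;+\; (\nu\cdot\phi)\,\delta_{\p\om}
\end{equation*}
as a distribution on $\om_k$, and the normal trace $\nu\cdot\phi\in\hilbert^{-1/2}(\p\om)$ of a field that is merely in $\d(\om)$ has no reason to vanish; the surface distribution is not an $\lt(\om_k)$ function, so $\tilde\phi\notin\d(\om_k)$ and the smooth-domain inequality cannot be applied to it. Symmetrically, for $\phi\in\r(\om)\cap\dc(\om)$ the zero extension lies in $\dc(\om_k)$ but its rotation acquires the surface term $(\nu\times\phi)\,\delta_{\p\om}$. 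This is precisely why the published proofs of the convex case do not extend the given field: they either approximate $\om$ by smooth convex domains and construct \emph{new} fields $\phi_k$ on $\om_k$ by solving auxiliary boundary value problems whose data converge to those of $\phi$ (the route of Amrouche et al.), or they first establish the Kadlec-type estimate $\normltom{\na\na p}\le\normltom{\Delta p}$ for the Dirichlet/Neumann Laplacian on convex domains and reduce the vector statement to it via a Helmholtz decomposition. Without one of these substitutes for the zero extension, the passage from smooth convex to general convex domains is not established.
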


\begin{proof}
See \cite[Thm. 2.17]{amrouche98}.
\end{proof}

\begin{lem} \label{lem:zeromean}
Let $\om\subset\reals^d$. The inclusion $\dcz \subset \ltz$ holds.
\end{lem}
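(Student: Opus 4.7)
The plan is to test the defining condition of $\dc$ against the coordinate functions $x_i$, which since $\om$ is bounded (the standing assumption throughout the paper) lie in $\lt(\om)$ and serve as admissible $\ho(\om)$ test functions with $\na x_i = e_i$.

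At the smooth level, for $\phi \in \cic(\om)$ and fixed $i \in \{1,\ldots,d\}$, the product $x_i \phi$ still has compact support in $\om$. Extending by zero and applying the classical divergence theorem on a large ball containing $\om$ gives $\int_\om \div(x_i\phi)\,\dx = 0$, and expanding the divergence produces the identity
\[
\int_\om \phi_i \, \dx \;=\; -\int_\om x_i \, \div\phi \, \dx.
\]

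Given $\phi \in \dcz$, by definition there is a sequence $\phi_n \in \cic(\om)$ with $\phi_n \to \phi$ and $\div\phi_n \to \div\phi = 0$ in $\lt(\om)$. I would apply the smooth identity to each $\phi_n$ and pass to the limit, legitimate because $1,x_i \in \lt(\om)$ by boundedness of $\om$, to obtain $\int_\om \phi_i\,\dx = 0$. Repeating for each $i$ shows that every component of $\phi$ has vanishing mean, which by the paper's convention is exactly $\phi \in \ltz$.

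There is essentially no obstacle: the proof is a one-step density argument from $\cic$ to $\dc$. The only point to note is that boundedness of $\om$ really is used, since it is what puts the test functions $x_i$ and the constant $1$ into $\lt(\om)$; for unbounded $\om$ one would need a truncation-and-limit argument, which falls outside the paper's standing setting.
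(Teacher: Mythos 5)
Your proof is correct and is essentially the paper's argument: the paper writes the same computation compactly as $\scpltom{\phi_i}{1}=\scpltom{\phi}{\na x_i}=-\scpltom{\div\phi}{x_i}=0$, relying on the integration-by-parts identity for $\phi\in\dc$ against $x_i\in\ho$, which is itself justified by exactly the density passage from $\cic$ that you spell out. Your remark that boundedness of $\om$ is what makes $1$ and $x_i$ admissible $\lt$ test functions is a fair observation consistent with the paper's standing assumptions.
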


\begin{proof}
Let $\phi\in\dcz$. Then
$\scpltom{\phi_i}{1} = \scpltom{\phi}{\na x_i} = -\scpltom{\div\phi}{x_i} = 0$
for any $i\in\{1,\ldots,d\}$.
\end{proof}

\begin{lem} \label{lem:TMC}
Let $\om\subset\reals^3$ be convex, $\phi\in\rcz\cap\inv{\eps}\d$ and $\psi\in\rc\cap\inv{\eps}\dz$. Then the estimates $\normltomw{\phi}{\eps} \leq \cfe \normltom{\div\eps\phi}$ and $\normltomw{\psi}{\eps} \leq \sqrt{\ol{\eps}}\cp \normltom{\rot\psi}$ hold.
\end{lem}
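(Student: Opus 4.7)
The plan is to handle the two estimates separately, in each case exploiting one of the Helmholtz decompositions in \eqref{eq:hdconvex1}--\eqref{eq:hdconvex2} to represent the given vector field as a (scalar or vector) potential, then integrating by parts and applying the appropriate scalar inequality (weighted Friedrichs for the first, Poincar\'e for the second).

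For the first estimate, I would use that $\phi\in\rcz$ together with $\na\hoc=\rcz$ (from \eqref{eq:hdconvex1}) to write $\phi=\na u$ with $u\in\hoc$. Since $\eps\phi\in\d$ and $u\in\hoc$, integration by parts gives
\begin{equation*}
	\normltomw{\phi}{\eps}^2
	= \scpltom{\eps\phi}{\na u}
	= -\scpltom{\div\eps\phi}{u}.
\end{equation*}
Cauchy--Schwarz and the weighted Friedrichs inequality \eqref{eq:cfA} applied with $\A=\eps$ (so that $\normltom{u}\leq\cfe\normltomw{\na u}{\eps}=\cfe\normltomw{\phi}{\eps}$) then yield the claimed bound after cancelling one factor of $\normltomw{\phi}{\eps}$.

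For the second estimate, the argument is a bit longer. Since $\eps\psi\in\dz$ and $\dz=\rot\r$ by \eqref{eq:hdconvex1}, there exists $\zeta_0\in\r$ with $\rot\zeta_0=\eps\psi$. I would then use the (unweighted) Helmholtz decomposition $\lt=\na\ho\oplus\dcz$ (i.e.\ \eqref{eq:hdconvex2} with $\eps=\id$) to split $\zeta_0=\na w+\zeta$ with $w\in\ho$ and $\zeta\in\dcz$; the corrected potential $\zeta$ still satisfies $\rot\zeta=\eps\psi$. Because $\om$ is convex, Lemma \ref{lem:horeg} applies to $\zeta\in\r\cap\dc$ and gives $\zeta\in\ho$ with
\begin{equation*}
	\normltom{\na\zeta}^2
	\leq \normltom{\div\zeta}^2+\normltom{\rot\zeta}^2
	= \normltom{\eps\psi}^2,
\end{equation*}
while Lemma \ref{lem:zeromean} delivers $\zeta\in\ltz$, so Poincar\'e yields $\normltom{\zeta}\leq\cp\normltom{\eps\psi}$. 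Integrating by parts using $\psi\in\rc$,
\begin{equation*}
	\normltomw{\psi}{\eps}^2
	= \scpltom{\rot\zeta}{\psi}
	= \scpltom{\zeta}{\rot\psi}
	\leq \normltom{\zeta}\normltom{\rot\psi}
	\leq \cp\normltom{\eps\psi}\normltom{\rot\psi}.
\end{equation*}
Finally, the operator inequality $\eps^{2}\leq\ol{\eps}\,\eps$ (valid for any self-adjoint $\eps$ with $\eps\leq\ol{\eps}\,\id$) gives $\normltom{\eps\psi}\leq\sqrt{\ol{\eps}}\,\normltomw{\psi}{\eps}$, and cancelling one factor of $\normltomw{\psi}{\eps}$ yields the assertion.

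The main obstacle is the second part, specifically producing a vector potential $\zeta$ with the three simultaneous properties ``$\rot\zeta=\eps\psi$'', ``$\div\zeta=0$'', and ``normal trace zero'', so that Lemmas \ref{lem:horeg} and \ref{lem:zeromean} can be chained to turn $\normltom{\rot\zeta}$ into a control on $\normltom{\zeta}$ through $\cp$. Convexity of $\om$ is essential here: it is what grants the $\ho$ regularity in Lemma \ref{lem:horeg} and validates the Helmholtz decompositions used to construct $\zeta$. Once that potential is in hand, the rest is a sequence of integration by parts and straightforward norm comparisons.
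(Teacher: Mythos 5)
Your proof is correct and follows essentially the same route as the paper: a scalar potential in $\hoc$ plus the weighted Friedrichs inequality for the first estimate, and a divergence-free vector potential in $\r\cap\dcz$ combined with Lemmas \ref{lem:horeg} and \ref{lem:zeromean} and the Poincar\'e inequality for the second. The only difference is cosmetic: you spell out the projection of the raw potential onto $\dcz$ via $\lt=\na\ho\oplus\dcz$, which the paper compresses into the identity $\rot\r=\rot(\r\cap\dcz)$.
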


\begin{proof}
We prove the first estimate following \cite{paulymaxconst1}. Let $\phi$ belong to $\rcz\cap\inv{\eps}\d$. Since $\na\hoc = \rcz$, there exists a scalar potential $\varphi\in\hoc$ such that $\phi=\na\varphi$, and we have
\begin{equation*}
	\normltomw{\phi}{\eps}^2
	= \scpltom{\eps\phi}{\phi}
	= \scpltom{\eps\phi}{\na\varphi}
	= - \scpltom{\div\eps\phi}{\varphi}
	\leq \normltom{\div\eps\phi} \normltom{\varphi}
	\leq \cfe \normltom{\div\eps\phi} \normltomw{\na\varphi}{\eps} ,
\end{equation*}
where we have applied \eqref{eq:cfA} with $\alpha=\eps$. This proves the first estimate. The second estimate we prove in the way presented in \cite{anjamhelmholtz}. Let $\psi$ belong to $\rc\cap\inv{\eps}\dz$. From \eqref{eq:hdconvex1}--\eqref{eq:hdconvex2} we deduce $\dz = \rot\r = \rot(\r\cap\dcz)$. The latter identity is seen by decomposing $\r$ using $\lt = \na\ho \oplus \dcz$. Thus, since $\eps\psi\in\dz$, there exists a vector potential $\xi\in\r\cap\dcz$ such that $\eps\psi=\rot\xi$. With Lemmas \ref{lem:horeg} and \ref{lem:zeromean} we see that $\xi$ also belongs to $\ho\cap\ltz$, and we can write
\begin{align*}
	\normltomw{\psi}{\eps}^2
	& = \scpltom{\eps\psi}{\psi}
	  = \scpltom{\rot\xi}{\psi} 
	  = \scpltom{\xi}{\rot\psi}
	  \leq \normltom{\xi} \normltom{\rot\psi}
	  \leq \cp \normltom{\na\xi} \normltom{\rot\psi} \\
	& = \cp \normltom{\rot\xi} \normltom{\rot\psi}
	  = \cp \normltom{\eps\psi} \normltom{\rot\psi}
	  \leq \sqrt{\ol{\eps}} \cp \normltomw{\psi}{\eps} \normltom{\rot\psi} ,
\end{align*}
where we used the Poincar\'e estimate and Lemma \ref{lem:horeg}. This proves the second estimate.
\end{proof}

\begin{rem}
The second estimate in Lemma \ref{lem:TMC} was proven with a slightly better constant in \cite{anjamhelmholtz}. This was achieved by refining the global zero mean property of Lemma $\ref{lem:zeromean}$ into a local equivalent. We skip this improvement in this note, since it does not change the overall behaviour of the forthcoming computable estimate with respect to small $\ul{\eps}$.
\end{rem}

We now sate the refinement of \eqref{eq:maxbound1}.

\begin{theo} \label{thm:TMC}
Let $\om\subset\reals^3$ be convex and $\eps\in\li$ be a self-adjoint uniformly positive definite matrix. Then $\cme \leq \max\{ \cfe, \sqrt{\ol{\eps}}\cp \}$ holds.
\end{theo}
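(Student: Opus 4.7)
My plan is to apply the $\eps$-weighted Helmholtz decomposition \eqref{eq:hdconvex1} to split any admissible field $\phi\in\rc\cap\inv{\eps}\d$ into an irrotational piece and an ($\eps$-)solenoidal piece, estimate each piece separately by Lemma \ref{lem:TMC}, and then recombine the two bounds using the orthogonality of the decomposition in $\scpltomw{\,\cdot\,}{\,\cdot\,}{\eps}$.

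Concretely, I would start by writing $\phi = \phi_1 + \phi_2$ with $\phi_1 \in \rcz = \na\hoc$ and $\phi_2 \in \inv{\eps}\rot\r \subset \inv{\eps}\dz$, the two summands being $\eps$-orthogonal. Since $\rcz\subset\rc$, trivially $\phi_1\in\rc$; hence $\phi_2 = \phi - \phi_1$ also lies in $\rc$. Similarly, since $\phi_2\in\inv{\eps}\dz\subset\inv{\eps}\d$, the complementary piece $\phi_1 = \phi - \phi_2$ lies in $\inv{\eps}\d$. Thus $\phi_1 \in \rcz\cap\inv{\eps}\d$ and $\phi_2 \in \rc\cap\inv{\eps}\dz$, i.e.\ each part sits in exactly the space required by one of the two estimates of Lemma \ref{lem:TMC}. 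Moreover, because $\rot\phi_1 = 0$ and $\div\eps\phi_2 = 0$, the differential data are simply inherited, namely $\div\eps\phi_1 = \div\eps\phi$ and $\rot\phi_2 = \rot\phi$.

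Applying Lemma \ref{lem:TMC} to the two parts yields
\begin{equation*}
\normltomw{\phi_1}{\eps} \leq \cfe\,\normltom{\div\eps\phi},
\qquad
\normltomw{\phi_2}{\eps} \leq \sqrt{\ol{\eps}}\,\cp\,\normltom{\rot\phi}.
\end{equation*}
By $\eps$-orthogonality $\normltomw{\phi}{\eps}^2 = \normltomw{\phi_1}{\eps}^2 + \normltomw{\phi_2}{\eps}^2$, so squaring and adding gives
\begin{equation*}
\normltomw{\phi}{\eps}^2
\leq \cfe^2\,\normltom{\div\eps\phi}^2 + \ol{\eps}\,\cp^2\,\normltom{\rot\phi}^2
\leq \max\{\cfe^2,\,\ol{\eps}\,\cp^2\}\bigl(\normltom{\div\eps\phi}^2 + \normltom{\rot\phi}^2\bigr),
\end{equation*}
and taking square roots delivers the claimed bound $\cme \leq \max\{\cfe,\sqrt{\ol{\eps}}\cp\}$.

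I do not expect any genuine obstacle: the decomposition \eqref{eq:hdconvex1} is closed by convexity of $\om$ (via Weck's and Rellich's selection theorems, as already noted before the statement), and the membership of $\phi_1,\phi_2$ in the correct subspaces is just a linear-algebra bookkeeping using that $\rc$ and $\inv{\eps}\d$ are linear spaces containing $\phi$. The only thing to watch is that one must invoke the \emph{weighted} Friedrichs constant $\cfe$ inside Lemma \ref{lem:TMC} (this is already built into its proof via \eqref{eq:cfA} with $\A=\eps$), which is exactly the improvement over \eqref{eq:maxbound1} that permits Theorem \ref{thm:A} (or \ref{thm:A2}) to be plugged in afterwards to produce a bound on $\cme$ that does not blow up as $\ul{\eps}\to 0$.
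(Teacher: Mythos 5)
Your proposal is correct and follows essentially the same route as the paper: decompose $\phi\in\rc\cap\inv{\eps}\d$ via the $\eps$-weighted Helmholtz decomposition \eqref{eq:hdconvex1} into $\phi_1\in\rcz\cap\inv{\eps}\d$ and $\phi_2\in\rc\cap\inv{\eps}\dz$, apply the two estimates of Lemma \ref{lem:TMC}, and recombine using $\eps$-orthogonality. The only cosmetic difference is that you spell out the linear-space bookkeeping showing each summand lands in the correct intersection space, which the paper states more compactly as the decomposition $\rc\cap\inv{\eps}\d = (\rcz\cap\inv{\eps}\d)\opluse(\rc\cap\inv{\eps}\dz)$.
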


\begin{proof}
We first use \eqref{eq:hdconvex1}--\eqref{eq:hdconvex2} to decompose the space $\rc\cap\inv{\eps}\d$.
By decomposing $\rc$ using $\lt = \rcz\opluse\inv{\eps}\dz$ we obtain $\rc = \rcz \opluse (\rc\cap\inv{\eps}\dz)$. Thus, we deduce
\begin{equation*}
	\rc\cap\inv{\eps}\d =
	(\rcz\cap\inv{\eps}\d) \opluse (\rc\cap\inv{\eps}\dz) .
\end{equation*}
Using the above decomposition we can write $\phi\in\rc\cap\inv{\eps}\d$ as the sum $\phi=\phi_1+\phi_2$, where
\begin{equation*}
	\phi_1 \in \rcz\cap\inv{\eps}\d, \qquad
	\phi_2 \in \rc\cap\inv{\eps}\dz,
\end{equation*}
and furthermore, $\div\eps\phi=\div\eps\phi_1$ and $\rot\phi=\rot\phi_2$ hold. By using Lemma \ref{lem:TMC} we then have
\begin{equation*}
	\normltomw{\phi}{\eps}^2
	= \normltomw{\phi_1}{\eps}^2 + \normltomw{\phi_2}{\eps}^2
	\leq \cfe^2 \normltom{\div\eps\phi_1}^2
		+ \ol{\eps}\cp^2 \normltom{\rot\phi_2}^2
	= \cfe^2 \normltom{\div\eps\phi}^2
		+ \ol{\eps}\cp^2 \normltom{\rot\phi}^2 ,
\end{equation*}
from which the assertion follows.
\end{proof}

The improvement of \eqref{eq:TMC_bound_coarse}, in the case of diagonal $\eps$, then looks like follows.

\begin{theo} \label{thm:TMC_bound}
Let $\om\subset\reals^3$ be convex and $\eps\in\li$ be a self-adjoint diagonal matrix such that properties \eqref{eq:Adef}--\eqref{eq:Acond2} with $\alpha=\eps$ hold. Then we have the estimate
\begin{equation*}
	\cme \leq \frac{1}{\pi} \max \Bigg\{ \sqrt{\brac{\frac{\ul{\eps}_1}{l_1^2}+\frac{\ul{\eps}_2}{l_2^2}+\frac{\ul{\eps}_3}{l_3^2}}}^{-1}, \sqrt{\ol{\eps}} \diam(\om) \Bigg\} .
\end{equation*}
\end{theo}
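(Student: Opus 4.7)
The proof is essentially an assembly step: all the heavy lifting has already been done in Theorem \ref{thm:TMC}, Theorem \ref{thm:A}, and the Payne--Weinberger bound \eqref{eq:poincarebound}. My plan is simply to chain these three estimates together and then tidy up the maximum.

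First I would invoke Theorem \ref{thm:TMC}, which gives the structural estimate
\begin{equation*}
	\cme \leq \max\{\cfe,\sqrt{\ol{\eps}}\cp\}.
\end{equation*}
The hypothesis on $\eps$ in the present theorem is exactly the hypothesis needed to apply Theorem \ref{thm:A} with $\alpha=\eps$, yielding
\begin{equation*}
	\cfe \leq \brac{\pi\sqrt{\frac{\ul{\eps}_1}{l_1^2}+\frac{\ul{\eps}_2}{l_2^2}+\frac{\ul{\eps}_3}{l_3^2}}}^{-1}.
\end{equation*}
For the Poincar\'e constant, since $\om$ is assumed convex, the Payne--Weinberger bound \eqref{eq:poincarebound} gives $\cp\leq\diam(\om)/\pi$.

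Plugging both estimates into the max and factoring out the common $1/\pi$ immediately produces the claimed bound. Since the maximum of two nonnegative quantities is monotone in each argument, the upper bounds can be substituted inside $\max\{\cdot,\cdot\}$ without issue. There is really no obstacle here; the only point worth noting is that the hypothesis that $\eps$ is diagonal and uniformly positive definite in each component is precisely what is needed to invoke Theorem \ref{thm:A}, while convexity of $\om$ is what is needed both for Theorem \ref{thm:TMC} (through the Helmholtz decompositions \eqref{eq:hdconvex1}--\eqref{eq:hdconvex2} and the regularity of Lemma \ref{lem:horeg}) and for the Payne--Weinberger bound \eqref{eq:poincarebound}. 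Once those two hypotheses are in place, the conclusion follows in a single line of algebra.
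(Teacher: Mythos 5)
Your proposal is correct and follows exactly the same route as the paper, which likewise proves this theorem by combining Theorem \ref{thm:TMC} with Theorem \ref{thm:A} and the Payne--Weinberger bound \eqref{eq:poincarebound}. Your added remarks on monotonicity of the maximum and on which hypothesis feeds which ingredient are accurate but not needed beyond the one-line assembly the paper gives.
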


\begin{proof}
Theorem \ref{thm:TMC} together with Theorem \ref{thm:A} and \eqref{eq:poincarebound} result in the assertion.
\end{proof}

\begin{rem} \label{rem:TMC}
\mbox{}
\begin{itemize}
\item[\bf(i)] It is easy to see that the upper bound of Theorem \ref{thm:TMC_bound} is always smaller or equal to the upper bound \eqref{eq:TMC_bound_coarse}. Moreover, with $\eps$ a constant real number, these estimates coincide.
\item[\bf(ii)] The above procedure furnishes upper bounds of $\cme$ even when the diagonal matrix $\eps$ is not uniformly positive definite (see Appendix \ref{app:semidef}).
\end{itemize}
\end{rem}

Under certain conditions non-diagonal $\eps$ can be handled using Theorem \ref{thm:A2}. Below $\epst$ is the diagonal matrix related to $\eps$ satisfying the properties \eqref{eq:At}--\eqref{eq:lowerbound}.

\begin{theo} \label{thm:TMC_bound2}
Let $\om\subset\reals^3$ be convex and $\eps\in\li$ be a self-adjoint matrix valued function for which $\epst$ is uniformly positive definite. Then we have the estimate
\begin{equation*}
	\cme \leq \frac{1}{\pi} \max \Bigg\{ \sqrt{\brac{\frac{\ul{\epst}_1}{l_1^2}+\frac{\ul{\epst}_2}{l_2^2}+\frac{\ul{\epst}_3}{l_3^2}}}^{-1}, \sqrt{\ol{\eps}} \diam(\om) \Bigg\} .
\end{equation*}
\end{theo}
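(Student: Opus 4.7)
The plan is to follow essentially the same path as the proof of Theorem \ref{thm:TMC_bound}, but invoke Theorem \ref{thm:A2} in place of Theorem \ref{thm:A} to handle the non-diagonal weight. The core observation is that Theorem \ref{thm:TMC} has already been established for a general self-adjoint, uniformly positive definite $\eps \in \li$ (not just diagonal), since its proof only uses the Helmholtz decompositions \eqref{eq:hdconvex1}--\eqref{eq:hdconvex2} together with the weighted Friedrichs inequality \eqref{eq:cfA} with $\alpha = \eps$ and the Poincar\'e inequality. Hence we immediately have
\begin{equation*}
	\cme \leq \max\{ \cfe , \sqrt{\ol{\eps}}\,\cp \} ,
\end{equation*}
with no further structural assumption on $\eps$ beyond those of the present theorem.

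The next step is to estimate each of the two terms. For the Poincar\'e constant we use Payne--Weinberger's bound \eqref{eq:poincarebound}, $\cp \leq \diam(\om)/\pi$, which requires only the convexity of $\om$. For the weighted Friedrichs constant, the hypothesis that $\epst$ (built from $\eps$ via \eqref{eq:At}) is uniformly positive definite is exactly what is needed to apply Theorem \ref{thm:A2} with $\alpha = \eps$, yielding
\begin{equation*}
	\cfe \leq \brac{\pi\sqrt{\frac{\ul{\epst}_1}{l_1^2} + \frac{\ul{\epst}_2}{l_2^2} + \frac{\ul{\epst}_3}{l_3^2}}}^{-1} .
\end{equation*}
Combining these two bounds inside the maximum gives the asserted estimate.

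There is no real obstacle here: the non-diagonal case is already reduced to the diagonal case by the construction \eqref{eq:At} and the comparison \eqref{eq:lowerbound}, and this reduction was absorbed into Theorem \ref{thm:A2}. The only point to double-check is that Theorem \ref{thm:TMC} indeed goes through for general (possibly non-diagonal) self-adjoint uniformly positive definite $\eps$, which is clear from its proof via Lemma \ref{lem:TMC}, since the first estimate there applies \eqref{eq:cfA} with $\alpha = \eps$ without any diagonality assumption. Therefore the proof reduces to a one-line assembly.

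\begin{proof}
Theorem \ref{thm:TMC} gives $\cme \leq \max\{ \cfe, \sqrt{\ol{\eps}}\,\cp \}$. Applying Theorem \ref{thm:A2} with $\alpha = \eps$ to estimate $\cfe$, and \eqref{eq:poincarebound} to estimate $\cp$, yields the assertion.
\end{proof}
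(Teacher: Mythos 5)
Your proof is correct and is exactly the paper's argument: combine Theorem \ref{thm:TMC} (which indeed holds for general self-adjoint, uniformly positive definite $\eps$) with Theorem \ref{thm:A2} for $\cfe$ and \eqref{eq:poincarebound} for $\cp$. Nothing further is needed.
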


\begin{proof}
Theorem \ref{thm:TMC} together with Theorem \ref{thm:A2} and \eqref{eq:poincarebound} result in the assertion.
\end{proof}

\begin{rem}
For $\epst$ to be uniformly positive definite would require that the off-diagonal entries of $\eps$ be comparatively small compared to its diagonal entries. However, now Remark \ref{rem:TMC} (ii) holds with respect to $\epst$. In particular, for an upper bound of $\cme$ it is enough that $\epst$ is positive semi-definite such that \emph{one} of the diagonal entries of $\epst$ is uniformly positive definite.
\end{rem}

We conclude with an example.

% EXAMPLE: DEPENDENCE ON A CONSTANT MATRIX
\begin{ex}[Diagonal matrix $\eps$] \label{ex:ceps}
Let $\om\subset(0,1)^3$ and $\eps$ be the uniformly positive definite constant matrix
\begin{equation*}
	\eps = \begin{pmatrix} 1 & 0 & 0 \\ 0 & 1 & 0 \\ 0 & 0 & \delta \end{pmatrix} , \qquad \delta>0 .
\end{equation*}
The estimate \eqref{eq:TMC_bound_coarse} gives the upper bound
\begin{equation} \label{eq:ceps1}
	\cme \leq \frac{\sqrt{3}}{\pi} \max\left\{ \frac{1}{3\sqrt{\delta}} , 1 \right\} ,
\end{equation}
and Theorem \ref{thm:TMC_bound} gives
\begin{equation} \label{eq:ceps2}
	\cme \leq
	\frac{\sqrt{3}}{\pi} .
\end{equation}
Obviously the latter does not blow up as $\delta$ becomes smaller. Table \ref{tbl:ceps} shows the values of the bounds with different $\delta$.
\end{ex}

\begin{table}[h]
\caption{Example \ref{ex:ceps}: Values of the upper bounds \eqref{eq:ceps1} and \eqref{eq:ceps2} with different $\delta$.}
\begin{tabular}{c|r|r|r|r|r|r|r}
$\delta$ & \multicolumn{1}{c|}{$10^{-6}$} & \multicolumn{1}{c|}{$10^{-4}$} & \multicolumn{1}{c|}{$10^{-2}$} & \multicolumn{1}{c|}{$1$} & \multicolumn{1}{c|}{$10^{2}$} & \multicolumn{1}{c|}{$10^{4}$} & \multicolumn{1}{c}{$10^{6}$} \\ 
\hline
\eqref{eq:ceps1} & 183.77630 & 18.37763 & 1.83776 & 0.55133 & 0.55133 & 0.55133 & 0.55133 \\
\hline
\eqref{eq:ceps2} & \multicolumn{7}{c}{$\sqrt{3}/\pi \approx$ 0.55133}
\end{tabular}
\label{tbl:ceps}
\end{table}

%====================================================================
%====================================================================

\bibliographystyle{plain} 
\bibliography{biblio}

%====================================================================
%====================================================================

\appendix

\section{Positive Semi-Definite Matrices $\A$}
\label{app:semidef}

We shortly demonstrate that upper bounds for $\cfA$ can be obtained even if the self-adjoint $\A\in\li$ is only positive semi-definite: Let $\om\subset(-1,1)^3$ and $\A$ be the constant matrix
\begin{equation*}
	\A = \begin{pmatrix} 0 & 0 & 0 \\ 0 & 0 & 0 \\ 0 & 0 & 1 \end{pmatrix} .
\end{equation*}
The smallest eigenvalue is $\ul{\A}=0$, so the estimate \eqref{eq:coarse} cannot be used. However, as stated in Remark \ref{rem:A} (iii), we can still obtain upper bounds for $\cfA$; we can apply Lemma \ref{lem:PART} with $i=3$ to obtain
\begin{equation*}
	\normltom{\varphi}
	\leq \frac{2}{\pi} \normltom{\p_3\varphi}
	= \frac{2}{\pi} \normltomw{\na\varphi}{\A}
	\qquad \Rightarrow \qquad
	\cfA \leq \frac{2}{\pi} .
\end{equation*}
The quantity $\normltomw{\na\,\cdot\,}{\A}$ is now not a norm, though.

\end{document}